\definecolor{greenbean}{RGB}{199,237,204}
\newtheorem{thm}{Theorem}[section]
\newtheorem{prop}[thm]{Proposition}
\newtheorem{eg}{Example}[section]
\newtheorem{Def}[thm]{Definition}
\def\C{{\mathbb C}}
\def\P{{\mathbb P}}
\def \ta{\tau}
\def \ta1{\tau_1}
\def \G{\Gamma}
\newcommand{\uThreePointOuter}[5]{
Vertex $ #1 $ is an outer $3$-point. The braid monodromy corresponding to this $3$-point is:
\begin{eqnarray} \label{#5-1}
{\widetilde{\Delta_#1}} = Z_{#2 \; #2', #4 \; #4'}^2 \cdot Z_{#2',#3\;#3'}^3\cdot(Z_{#2\;#2'})^{Z_{#2',#3\;#3'}^2}\cdot(Z_{#3\;#3',#4}^3)^{Z_{#2',#3\;#3'}^2}
\cdot(Z_{#4\;#4'})^{Z_{#3\;#3',#4}^2Z_{#2',#3\;#3'}^2}. \nonumber
\end{eqnarray}
${\widetilde{\Delta_#1}}$ thus gives rise to the following relations:
\begin{equation}\label{#5-2}
\langle\Gamma_#2',\Gamma_#3\rangle=\langle\Gamma_#2',\Gamma_#3'\rangle=\langle\Gamma_#2',
\Gamma_#3^{-1}\Gamma_#3'\Gamma_#3\rangle=e,
\end{equation}
\begin{equation}\label{#5-3}
\Gamma_#2=\Gamma_#3'\Gamma_#3\Gamma_#2'\Gamma_#3^{-1}{\Gamma_#3'}^{-1},
\end{equation}
\begin{equation}\label{#5-4}
\begin{split}
\langle\Gamma_#4,\Gamma_#3'\Gamma_#3\Gamma_#2'\Gamma_#3{\Gamma_#2'}^{-1}\Gamma_#3^{-1}{\Gamma_#3'}^{-1}\rangle=&
\langle\Gamma_#4,\Gamma_#3'\Gamma_#3\Gamma_#2'\Gamma_#3'{\Gamma_#2'}^{-1}\Gamma_#3^{-1}{\Gamma_#3'}^{-1}
\rangle= \\
=& \langle\Gamma_#4,\Gamma_#3'\Gamma_#3\Gamma_#2'\Gamma_#3^{-1}\Gamma_#3'\Gamma_#3{\Gamma_#2'}^{-1}
\Gamma_#3^{-1}{\Gamma_#3'}^{-1}\rangle=e,
\end{split}
\end{equation}
\begin{equation}\label{#5-5}
\Gamma_#4'=\Gamma_#4\Gamma_#3'\Gamma_#3\Gamma_#2'\Gamma_#3'\Gamma_#3{\Gamma_#2'}^{-1}\Gamma_#3^{-1}{\Gamma_#3'}^{-1}\Gamma_#4
\Gamma_#3'\Gamma_#3\Gamma_#2'\Gamma_#3^{-1}{\Gamma_#3'}^{-1}{\Gamma_#2'}^{-1}\Gamma_#3^{-1}
{\Gamma_#3'}^{-1}\Gamma_#4^{-1},
\end{equation}
\begin{equation}\label{#5-6}
[\Gamma_#2,\Gamma_#4]=[\Gamma_#2,\Gamma_#4']=[\Gamma_#2',\Gamma_#4]=[\Gamma_#2',\Gamma_#4']=e.
\end{equation}
}
\newcommand{\uThreePointInner}[5]{
Vertex $ #1 $ is an inner $3$-point. Its braid monodromy is
\begin{eqnarray}
\begin{split} \label{#5-1}
{\widetilde{\Delta_#1}} = & Z_{#2',#3\;#3'}^3 \cdot Z_{#2,#3\;#3'}^3 \cdot (Z_{#3\;#4'})^{Z_{#3\;#3'}^2 Z_{#2',#3\;#3'}^2} \cdot (Z_{#3'\;#4})^{Z_{#3\;#3'}^2 Z_{#4\;#4'}^2 Z_{#2',#3\;#3'}^2} \cdot \\
& (Z_{#3\;#4'})^{Z_{#3\;#3'}^2 Z_{#2,#3\;#3'}^2} \cdot (Z_{#3'\;#4})^{Z_{#3\;#3'}^2 Z_{#4\;#4'}^2 Z_{#2,#3\;#3'}^2}. \nonumber
\end{split}
\end{eqnarray}

These braids give rise to the following relations in $G$:
\begin{equation}\label{#5-1}
\langle\Gamma_#2',\Gamma_#3\rangle=\langle\Gamma_#2',\Gamma_#3'\rangle=\langle\Gamma_#2',
\Gamma_#3^{-1}\Gamma_#3'\Gamma_#3\rangle=e,
\end{equation}
\begin{equation}\label{#5-2}
\langle\Gamma_#2,\Gamma_#3\rangle=\langle\Gamma_#2,\Gamma_#3'\rangle=\langle\Gamma_#2,
\Gamma_#3^{-1}\Gamma_#3'\Gamma_#3\rangle=e,
\end{equation}
\begin{equation}\label{#5-3}
\Gamma_#3'\Gamma_#3\Gamma_#2'\Gamma_#3\Gamma_#2'^{-1}\Gamma_#3^{-1}{\Gamma_#3'}^{-1} = \Gamma_#4',
\end{equation}
\begin{equation}\label{#5-4}
\Gamma_#3'\Gamma_#3\Gamma_#2'\Gamma_#3'\Gamma_#2'^{-1}\Gamma_#3^{-1}{\Gamma_#3'}^{-1} = \Gamma_#4'\Gamma_#4{\Gamma_#4'}^{-1},
\end{equation}
\begin{equation}\label{#5-5}
\Gamma_#3'\Gamma_#3\Gamma_#2\Gamma_#3\Gamma_#2^{-1}\Gamma_#3^{-1}{\Gamma_#3'}^{-1} = \Gamma_#4',
\end{equation}
\begin{equation}\label{#5-6}
\Gamma_#3'\Gamma_#3\Gamma_#2\Gamma_#3'\Gamma_#2^{-1}\Gamma_#3^{-1}{\Gamma_#3'}^{-1} = \Gamma_#4'\Gamma_#4\Gamma_#4'^{-1}.
\end{equation}
}
\newcommand{\uTwoPoint}[4]{
Vertex $ #1 $ is a $2$-point. The braid monodromy corresponding to this point is:
\begin{equation}\label{#4-1}
\widetilde{\Delta_#1} = (Z_{#3\; #3'})^{Z_{#2 \; #2', #3}^2}\cdot Z_{#2 \; #2',#3}^3. \nonumber
\end{equation}
$\widetilde{\Delta_#1}$ gives rise to the following relations:
\begin{equation}\label{#4-2}
\langle \Gamma_#2, \Gamma_#3 \rangle = \langle \Gamma_#2', \Gamma_#3 \rangle = \langle \Gamma_#2^{-1}\Gamma_#2'\Gamma_#2, \Gamma_#3 \rangle = e,
\end{equation}
\begin{equation}\label{#4-3}
\Gamma_#3' = \Gamma_#3\Gamma_#2'\Gamma_#2\Gamma_#3\Gamma_#2^{-1}\Gamma_#2'^{-1}\Gamma_#3^{-1}.
\end{equation}
}
\begin{document}
\title{Fundamental group of Galois covers of degree $5$ surfaces\footnotetext{\hspace{-2.2em}
Email address: Meirav Amram: meiravt@sce.ac.il; Cheng Gong (corresponding author): cgong@suda.edu.cn; \\ Mina Teicher: teicher@macs.biu.ac.il; Wan-Yuan Xu: wanyuanxu@fudan.edu.cn\\
2010 Mathematics Subject Classification. 14D05, 14D06, 14H30, 14J10, 20F36. \\{\bf Key words}: Degeneration, generic projection, Galois cover, braid monodromy, fundamental group
}}
\author[1]{Meirav Amram}
\author[2]{Cheng Gong}
\author[3]{Mina Teicher}
\author[4]{Wan-Yuan Xu}

\affil[1]{\small{Department of Mathematics, Shamoon College of Engineering, Ashdod, Israel}}
\affil[2]{\small{Department of Mathematics, Soochow University, Suzhou 215006, Jiangsu, P. R. of China}}
\affil[3]{\small{Emmy Noether Research Institute for Mathematics, Bar-Ilan University, Ramat-Gan 52900, Israel}}
\affil[4]{\small{ Department of Mathematics, Shanghai Normal University, Shanghai 200234, P. R. of China}}

\date{}

\maketitle

\abstract{
Let $X$ be an algebraic surface of degree $5$, which is considered as a branch cover of $\mathbb{CP}^2$ with respect to a generic projection. The surface has a natural Galois cover with Galois group $S_5$. In this paper, we deal with the fundamental groups of Galois covers of degree $5$ surfaces that degenerate to nice plane arrangements; each of them is a union of five planes such that no three planes meet in a line.
}

\section{Introduction}\label{outline}
In 1977, Gieseker \cite{Gie} proved that the moduli space of surfaces of general type is a quasi-projective variety. Unlike the case of curves, it is not irreducible for fixed numerical invariants.
Catanese and Manetti \cite{C1,C2,Ma} proved some results about the structure and number of components of moduli spaces of general type surfaces. However, even today not much is known about such moduli spaces.
 In \cite{Tei2}, Teicher defined some new invariants of surfaces, stable on connected components of moduli space. The new invariants come from the polycyclic structure of the fundamental group of the complement of a branch curve.

Let $X$ be an algebraic surface of degree $n$; one can consider it as a branched cover of the projective plane $\mathbb{CP}^2$ with respect to a generic projection $\pi : X\rightarrow \mathbb{CP}^2$. The branch curve $S$ is an irreducible cuspidal plane curve of even degree, namely,
$S$ admits only nodes and ordinary cusps as its singularities. Chisini's conjecture \cite{Chisini}, which was confirmed by Kulikov \cite{Ku99, Ku08}, states that: If $S$ is the branch curve of a generic projection $\pi : X\rightarrow \mathbb{CP}^2$, then $\pi$ is determined uniquely by $S$, except for the case when $X$ is the Veronese surface $V_2$ in $\mathbb{CP}^5$. Thus, one can reduce the classification of algebraic surfaces to that of cuspidal branch curves with the same topological type.

It is well-known that the fundamental group of the complement of the branch curve $\pi_1(\mathbb{CP}^2-S)$ does not change when the complex structure of $X$ changes continuously. Thus, we can use such an invariant to distinguish the connected components of the corresponding moduli space of surfaces. In fact, all surfaces in the same component of the moduli space have the same homotopy type and, therefore, the same fundamental group of the complement of the branch curves.

In \cite{MoTe87a} and \cite{MoTe87}, Moishezon-Teicher showed that $\pi_1(\mathbb{CP}^2-S)$ is related with $\pi_1(X_{\text{Gal}})$, where the surface $X_{\text{Gal}}$ is the Galois cover of $X$. Thus, we can calculate fundamental groups of some surfaces of general type that otherwise may be very difficult to determine. Based on this idea, Moishezon-Teicher \cite{MoTe87} constructed a series of simply connected algebraic surfaces of general type, with positive and zero indices, thereby disproving the Bogomolov Watershed conjecture, which held that an algebraic surface of general type, with a positive index, has an infinite fundamental group.

In recent years, much work has been done in the study of $\pi_1(\mathbb{CP}^2-S)$ and $\pi_1(X_{\text{Gal}})$ -- for Cayley's singularities \cite{ADFT}; for different embeddings of $\mathbb{CP}^1\times \mathbb{CP}^1$ \cite{MoTe87}; for the Veronese surfaces $V_n, ~n\geq3$ \cite{19,20}, and $V_2$ \cite{Za37}; for the Hirzebruch surfaces $F_1(a,b),~ F_2(2,2)$ \cite{AFT09,FT08a}; for $T\times T$ where $T$ is a complex torus \cite{Osaka,ATV08}; for $K3$ surfaces \cite{pillow}; for $\mathbb{CP}^1\times T$ \cite{AFT03,AGTV02,AG04}; for $\mathbb{CP}^1\times C_g$ where $C_g$ is a curve of genus $g$ \cite{FT12}; and for certain toric surfaces \cite{Ogata}. In \cite{ADKY}, one can also find a description of computations of braid monodromy and certain quotients of $\pi_1(\mathbb{CP}^2-S)$; the motivation came from the theory of symplectic 4-manifolds. In \cite{Li08}, Liedtke computed a quotient of $\pi_1(X_{\text{Gal}})$ that depends on $\pi_1(X)$ and data from the generic projection only, thereby simplifying the computations of Moishezon, Teicher, and others.
In \cite{A-R-T}, the authors computed the fundamental groups of Galois covers of surfaces of degree $\leq 4$.

This paper may be considered as a  subsequent work of \cite{FT08}, where Friedman and the third named author discussed the local braid monodomry of  a singular point with multiplicity 5 . In this note, we will use the results therein to compute the fundamental groups of the complements of the branch curves and of the Galois covers of degree $5$ surfaces with nice planar degenerations (see Theorems \ref{thm-1}, \ref{thm-2}, \ref{thm-3}, \ref{thm-5}, \ref{thm-6}, \ref{thm-7}, \ref{thm-8}, \ref{thm-4}).

This paper is organized as follows:  In Section \ref{method}, we explain the main methods and give the fundamental and necessary background  that we use in this paper. Section \ref{small} identifies the fundamental groups of the Galois covers of all the surfaces of degree $5$ that degenerate to `nice' planar arrangements, i.e., those in which no three planes meet in a line. We consider the degenerations, give the braid monodromy and the group $\pi_1(\mathbb{CP}^2-S)$, and then determine whether $\pi_1(X_{\text{Gal}})$ is trivial or not.
The surfaces that we consider are: the Hirzebruch surface $F_1(2,1)$ (Subsection \ref{sec1}), a union of the surface $\C\P^1 \times \C\P^1$ and a plane (Subsection \ref{cp1}), a union of the Veronese surface $V_2$ and a plane (Subsection \ref{sec2}), two cases of a union  of the Cayley surface and two planes (Subsection \ref{sec4}), a union of the quartic 4-point surface with a plane (Subsection \ref{extended-4}), the quintic 5-point surface (Subsection \ref{central}), and a 4-point quintic degeneration (Subsection \ref{sec7}).

\paragraph{Acknowledgements:} We thank Ciro Ciliberto for useful
discussions on degeneration of surfaces and for giving us the examples of the surfaces. We also thank Robert Shwartz for helpful comments.

This work is supported by the Emmy Noether Research
Institute for Mathematics, the Minerva Foundation (Germany), the NSFC and the China Postdoctoral Science Foundation.


\section{Method and scientific background}\label{method}
In this section, we describe the main methods and the fundamental background used in this paper. The computations of the fundamental groups $\pi_1(\mathbb{CP}^2-S)$ and $\pi_1(X_{\text{Gal}})$ of surfaces $X$ of degree $5$,  with at worst isolated singularities and with nice degenerations, are explained here.

\subsection{Degeneration and regeneration}
We start with a projective algebraic surface $X$ embedded in a projective space $\mathbb{CP}^n$. We project it onto the projective plane $\mathbb{CP}^2$ to get its branch curve $S$. Because it is not easy to describe $S$, we use a method called degeneration. The definition is as follows:

\begin{Def}
Let $\Delta$ be the unit disc,
and let $X, Y$ be projective algebraic surfaces.
Let $p: Y \rightarrow \mathbb{CP}^2$ and $p': X \rightarrow \mathbb{CP}^2$
be projective embeddings.
We say that $p'$ is a \emph{projective degeneration} of $p$
if there exists a flat family $\pi: V \rightarrow \Delta$
and an embedding $F:V\rightarrow \Delta \times \mathbb{CP}^2$,
such that $F$ composed with the first projection is $\pi$,
and:
\begin{itemize}
\item[(a)] $\pi^{-1}(0) \simeq X$;
\item[(b)] there is a $t_0 \neq 0$ in $\Delta$ such that
$\pi^{-1}(t_0) \simeq Y$;
\item[(c)] the family $V-\pi^{-1}(0) \rightarrow \Delta-{0}$
is smooth;
\item[(d)] restricting to $\pi^{-1}(0)$, $F = {0}\times p'$
under the identification of $\pi^{-1}(0)$ with $X$;
\item[(e)] restricting to $\pi^{-1}(t_0)$, $F = {t_0}\times p$
under the identification of $\pi^{-1}(t_0)$ with $Y$.
\end{itemize}
\end{Def}

We perform a sequence of projective degenerations $X: =X_r \leadsto X_{r-1} \leadsto \cdots X_{r-i} \leadsto
X_{r-(i+1)} \leadsto \cdots \leadsto X_0$, and we refer to each step along the way as a partial
degeneration ($r$ is the number of partial degenerations).  The surface $X_0$ is a total
degeneration of $X$ if it is a union of $\mathbb{CP}^2s$.

In \cite[Sec.\,12]{CCFM} there are
examples of surfaces that can projectively degenerate to a union of planes such that no 3 planes meet in a line.
In our paper we deal with degree $5$ surfaces; this means $n=5$. For more technical details see \cite{amte1}.

\begin{eg}\label{ex:equiv}
Consider the surface $\mathbb{CP}^1\times\mathbb{CP}^1$. Take $\ell_1=\mathbb{CP}^1\times pt$
and $\ell_2=pt\times\mathbb{CP}^1$. For $k,j\in\mathbb{N}$, consider the linear combination $k\ell_1+j\ell_2$.
We embed our surface into a projective space via the linear system $|k\ell_1+j\ell_2|$. Denote the image of the embedding by $X_{kj}$.
Then $X_{kj}$ is a product of 2 rational curves, one of degree $k$ and the other one of degree $j$. Degenerate each of them into $k$  (resp. $j$) lines. Then $X_{kj}$ degenerates to a union of $k\cdot j$ \ $\mathbb{CP}^1\times\mathbb{CP}^1$ (in Figure \ref{cp1cp1}, we have $1\cdot2$ quadrics, each quadric represents  $\mathbb{CP}^1\times\mathbb{CP}^1$).

Note that each quadric can degenerate to a union of 2 planes. It is represented by a diagonal line that divides each square into 2 triangles,  each one representing a plane $\mathbb{CP}^2$ (see Figure \ref{cp1cp1} for the case $k=1,~j=2$).

\begin{figure}[ht]
\begin{center}
\scalebox{0.60}{\includegraphics{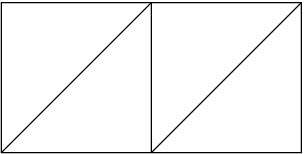}}
\end{center}
\setlength{\abovecaptionskip}{-0.15cm}
\caption{Degeneration of $\C\P^1 \times \C\P^1$}\label{cp1cp1}
\end{figure}

\end{eg}

Consider generic projections $\pi^{(i)} : X_{i} \rightarrow \mathbb{CP}^2$ with the branch curve $S_i$
for $0 \leq i \leq r$. Note that $S_{i-1}$ is a degeneration of
$S_{i}$. We regenerate $S_0$ to get the regenerated branch curve $S:=S_{r}$, using the regeneration Lemmas in \cite{BGT2}.

In the following diagram, 
 we illustrate the connections between the
significant objects $X, X_0, S$, and $S_0$.


\[\begin{CD}
  X\subseteq \mathbb{CP}^n  @>\text{degeneration}>> X_0\subseteq \mathbb{CP}^N \\
  @V\text{generic~ projection}VV                      @VV\text{generic~ projection}V \\
  S\subseteq \mathbb{CP}^2 @<\phantom{regeneration}<\text{regeneration}< S_0\subseteq \mathbb{CP}^2
  \end{CD}\]

\vspace{0.2cm}
 Now we explain in general the regeneration process.
 Say that the degree of the degenerated branch curve $S_{0}$ is $m$.
Each of the $m$ lines of $S_{0}$ should be counted as a double line
in the scheme-theoretic branch locus, as each arises from a nodal line. Another way to see this is to note that
the regeneration of $X_0$ induces a regeneration of $S_{0}$ in such a way that each point on the
typical fiber, say $c$, is replaced by two nearby points $c, c'$. This means that a line regenerates to two parallel lines or to a conic \cite{amte1}. The resulting branch curve $S$ is of degree $2m$.
In full generality, the curve $S_{0}$ may have singularities of multiplicity $k$ for any value of $k$. For simplicity, we say a singularity of multiplicity k is a k-point. A 1-point regenerates to a conic $(j,j')$ with a branch point. A 2-point regenerates to a conic and a tangent line \cite[Claim, p. 8]{19}.  The 3-points and 4-points in the paper can be inner or outer (on the outer edges).  An outer 3-point regenerates to 2 conics intersecting one another and a tangent line to both conics. An inner 3-point regenerates to a double hyperbola and a tangent line. In both cases, each tangency point regenerates to 3 cusps. Outer and inner 4-points are extensions of outer and inner 3-points respectively.
A 5-point regenerates as the 4-point regenerates, but with an addition of the 5th line \cite[Figure 4]{FT08}.

The resulting curve $S$ is a cuspidal curve with nodes and branch points. A branch point is topologically locally equivalent to $y^2=x$; a node (resp. a cusp) is topologically locally equivalent to $y^2=x^2$ (resp. $y^2=x^3$).

We note that 1- and 2-points were considered in \cite{pillow,Ogata,MRT,MoTe87,18,19}, inner 4-points were considered in \cite{pillow,19}, and 5-points were considered in \cite{FT08}.
The regeneration process for a large $k$ can be quite difficult, but work has been done for some specific values: see \cite{FT08}, \cite{amte1}, and \cite{agst} for 5-, 6-, and 8-points, respectively.

Now we need to explain how to derive the related braid monodromy for $S$ and  the fundamental group of its complement in $\mathbb{CP}^2$.
 We will follow the braid monodromy algorithm of Moishezon-Teicher \cite{BGT1,BGT2}. A detailed treatment can also be found in \cite{pillow,amte1}. Note that the braid group (and the braid monodromy) is very useful for study of the projective plane \cite{GG04}.
In the following subsection we present the general setting of a branch curve and the braid monodromy, then we apply it to our case, and to  certain  notations.

\subsection{A general setting of a branch curve}
Consider the setting illustrated in Figure \ref{setup}. Let $S$ be an algebraic curve in $\mathbb{C}^2$, with $p =
\deg(S)$. Let $\pi: \mathbb{C}^2 \rightarrow \mathbb{C}$ be a generic projection onto the first coordinate.
Define the fiber $K(x) = \{y \mid (x,y) \in S\}$ in $S$ over a fixed point $x$, projected to the $y$-axis.
Define $N = \{x \mid \# K(x) < p \}$ and $M' = \{ s \in S \mid \pi_{\mid s} \mbox{ is not \'{e}tale at } s \}$;
note that $\pi (M') = N$. Let $\{A_j\}^q_{j=1}$ be the set of points of $M'$ (the singularities of $S$) and let $N = \{x_j\}^q_{j=1}$ be the
projection of $\{A_j\}^q_{j=1}$ on the $x$-axis. Recall that $\pi$ is generic, so we assume that $\# (\pi^{-1}(x) \cap M') =1$ for
every $x \in N$. Let $E$ (resp. $D$) be a closed disk on the $x$-axis (resp. the $y$-axis), such that $M'
\subset \mbox{Int}(E \times D)$ and $N \subset \mbox{Int}(E)$. We choose $u \in \partial E$, a real point ``far enough" from the
set $N$,
 so that $|x|\ll u$ for every $x \in N$.
Define $\mathbb{C}_u = \pi^{-1}(u)$ and number the points of
$K=\mathbb{C}_u\cap S$ as $\{1 , \dots , p\}$.

\begin{figure}[ht]
\begin{center}
\scalebox{1.1}{\includegraphics{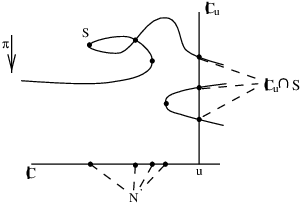}}
\end{center}
\setlength{\abovecaptionskip}{-0.15cm}
\caption{General setting}\label{setup}
\end{figure}

We now construct a geometric-base (g-base) for the fundamental group $\pi_1(E - N, u)$ (see \cite{BGT1} for details). Take a set of paths
$\{\gamma_j\}^q_{j=1}$ that connect $u$ with the points $\{x_j\}^q_{j=1}$ of $N$. Now encircle each $x_j$ with
a small circle, $c_j$, oriented counterclockwise. Denote the path segment from $u$ to the boundary of this circle
by $\gamma'_j$. We define an element (a loop) in the g-base as $\delta_j = {\gamma'}_j c_j {\gamma'}^{-1}_j$.
Let $B_p[D,K]$ be the braid group, and let $H_1, \dots , H_{p-1}$ be its frame (for complete definitions, see
\cite[Section III.2]{BGT1}). The braid monodromy of $S$  is a map $\varphi: \pi_1(E - N, u)
\rightarrow B_p[D,K]$ defined as follows (see \cite{artin} in detail): every loop in $E - N$ starting at $u$ has liftings to a system of $p$ paths in $(E - N) \times D$ starting at each point of $K$. Projecting them to $D$, we obtain $p$
paths in $D$ defining a motion $\{1(t), \dots , p(t)\}$ (for $0 \leq t \leq 1$) of $p$ points in $D$ starting
and ending at $K$. This motion defines a braid in $B_p [D , K]$.

We now explain how to compute this braid, following the notation of
Moishezon-Teicher. Let $A_j$ be a singularity in $S$ and $x_j$ its projection by $\pi$ to the $x$-axis. We
choose a point $x'_j$ next to $x_j$, such that \ $\pi^{-1}(x'_j)$ is a typical fiber.  We encircle $A_j$ with a
very small circle in such a way that the typical fiber $\pi^{-1}(x'_j)$ intersects the circle at two points, say
$a ,b$. We fix a skeleton (see \cite{BGT2}) $\xi_{x'_j}$ that connects $a$ and $b$, and denote it as $\langle a,b \rangle$. The
Lefschetz diffeomorphism $\Psi$ (see \cite{BGT1}) defines the corresponding skeleton $(\xi_{x'_j}) \Psi$ in the
typical fiber $\mathbb{C}_u$. This one defines a motion of its two endpoints, and we get the braid $\varphi(\delta_j) = \Delta \langle (\xi_{x'_j}) \Psi \rangle^{\epsilon_j}$, where $\varepsilon$ is fixed according to the type of the singularity (locally defined by the equation $y^2=x^\epsilon$, $\epsilon=1,2$, or $3$).
The braid monodromy factorization associated to $S$ is $\Delta^2_{p} = \prod\limits^q_{j=1} \varphi(\delta_j)$.

Using the braid monodromy factorization, we compute the fundamental group of the complement of $S$. By the van
Kampen Theorem \cite{vk}, there is a ''good" geometric base (g-base) $\{\G_{j}\}$ of $\pi_{1}(\C_{u} - S \cap \C_{u}, *)$,
such that the fundamental group $\pi_{1}(\mathbb{CP}^2 - S)$  of the complement of $S$ in $\mathbb{CP}^2$  is generated by the
images of $\{ \G_{j} \}$ with the relations $\varphi(\delta_{i})\G_{j} = \G_{j} \hspace{0.2cm}
\forall\, i,j$.

For our purposes, we take the curve $S$ to be the branch curve of a smooth surface $X$, which is a
nodal-cuspidal curve. Consider a small circle around a singularity. Denote by $a$ and $b$
the intersection points of the two branches with this small circle. Note that the branches meet at the singularity. Let $\G_{a},
\G_{b}$ be two non-intersecting loops in $\pi_{1}(\C_{u} - S \cap \C_{u}, *)$ around the intersection points of
the branches with the fiber $\C_{u}$ (constructed by cutting each of the paths and creating two loops that
proceed along the two parts and encircle $a$ and $b$); see \cite[Proposition-Example VI.1.1]{BGT1}.

\subsection{Braids and fundamental groups in our paper}
We return to the data described in this paper. Using the above explanations, in the regeneration process a 1-point regenerates to a conic $(j,j')$, a node between two lines $i$ and $j$ becomes two nodes as line $j$ doubles into two parallel lines, and a tangency between a conic $(i,i')$ and  line $j$ regenerates to three cusps when $j$ doubles into two parallel lines. Recall now that $S$ is a cuspidal curve of degree $2m$ (after the regeneration process).

 Therefore the braids related to $S$ in the paper are denoted as follows (we use the same notations as in \cite{19}):
\begin{enumerate}
\item for a branch point, $Z_{j \;j'}$ is a counterclockwise
half-twist of $j$ and $j'$ along a path below the real
axis,
\item for nodes, $Z^2_{i, j \;j'}=Z_{i\;j}^2 \cdot Z_{i\;j'}^2$ and
$Z^2_{i\;i', j \;j'}=Z_{i\;j}^2 \cdot Z_{i'\;j}^2\cdot Z_{i\;j'}^2\cdot Z_{i'\;j'}^2$,
\item for cusps, $Z^3_{i, j \;j'}=Z^3_{i\; j} \cdot (Z^3_{i\; j})^{Z_{j\;j'}} \cdot (Z^3_{i \; j})^{Z^{-1}_{j \; j'}}$.
\end{enumerate}
We note that a conjugation of braids is defined as $a^b = b^{-1}ab$.

Moreover, we denote the generators of the group $\pi_1(\mathbb{CP}^2-S)$ as $\G_1, \G_{1}', \dots, \G_{2m}, \G_{2m}'$. By the van Kampen
Theorem \cite{vk} we can get a presentation of $\pi_1(\mathbb{CP}^2-S)$ by means of generators $\{\G_{j}, \G_{j}'\}$ and relations of the types:
\begin{enumerate}
\item for a branch point, $\G_{j} = \G_{j}'$,
\item for nodes,
\begin{itemize}
\item $[\G_{i},\G_{j}]=\G_{i}\G_{j}\G_{i}^{-1}\G_{j}^{-1}=e$,
\item $[\G_{i},\G_{j}']=\G_{i}\G_{j}'\G_{i}^{-1}\G_{j}'^{-1}=e$,
\end{itemize}
\item for cusps,
\begin{itemize}
\item $\langle\G_{i},\G_{j}\rangle=\G_{i}\G_{j}\G_{i}\G_{j}^{-1}\G_{i}^{-1}\G_{j}^{-1}=e$,
\item $\langle\G_{i},\G_{j}'\rangle=\G_{i}\G_{j}'\G_{i}\G_{j}'^{-1}\G_{i}^{-1}\G_{j}'^{-1}=e$,
\item $\langle\G_{i},\G_{j}^{-1}\G_{j}'\G_{j}\rangle=\G_{i}\G_{j}^{-1}\G_{j}'\G_{j}\G_{i}\G_{j}^{-1}\G_{j}'^{-1}
    \G_{j}\G_{i}^{-1}\G_{j}^{-1}\G_{j}'^{-1}\G_{j}=e$.
    \end{itemize}
\end{enumerate}

To each list of relations we add the projective relation $\prod\limits_{j=m}^1 \G_{j}'\G_{j}=e$. Moreover, in some cases in the paper, we have parasitic intersections that induce commutative relations. These intersections come from lines in $X_0$ that do not intersect, but when projecting $X_0$ onto $\C\P^2$, they will intersect. See details in \cite{MoTe87}.

\bigskip

Our techniques also allow us to compute fundamental groups of Galois covers. We recall from \cite{MoTe87} that if
 $f : X\rightarrow \mathbb{CP}^2$ is a generic projection of degree $n$, then ${X}_{\text{Gal}}$, the Galois cover, is defined as follows:
  $${X}_{\text{Gal}}=\overline{(X \times_{\mathbb{CP}^{2}} \ldots \times_{\mathbb{CP}^{2}} X)-\triangle},$$
where the product is taken $n$ times, and $\triangle$ is the diagonal. We denote $G=\pi_1(\mathbb{CP}^2-S)$, and consider the quotient group $\tilde{G} = G/{\langle \G_{j}^2=e,\G_{j}'^2=e \rangle}$. We apply the theorem of Moishezon-Teicher. There is an exact sequence
\begin{equation}\label{M-T}
0 \rightarrow  \pi_1(X_{\text{Gal}}) \rightarrow \tilde{G} \rightarrow S_n \rightarrow 0,
\end{equation}
where the fundamental group of the Galois cover $\pi_1(X_{\text{Gal}})$ is the kernel of $\tilde{G}$ projected onto $S_n$.
In this paper, we determine whether $\pi_1(X_{\text{Gal}})$ is trivial or not.

\section{Calculations of the fundamental group}\label{small}
In this section, we consider the Hirzebruch surface $F_{1}(2,1)$ (Subsection \ref{sec1}), a union of the surface $\C\P^1 \times \C\P^1$ and a plane (Subsection \ref{cp1}), a union of the Veronese surface $V_2$ and a plane (Subsection \ref{sec2}), two cases of a union of the Cayley surface and two planes (Subsection \ref{sec4}), a union of the quartic 4-point surface with a plane (Subsection \ref{extended-4}), the quintic 5-point surface (Subsection \ref{central}), and the 4-point quintic degeneration (Subsection \ref{sec7}).

\begin{prop} There are $8$ possible quintic degenerations such that no 3 planes meet at a line, corresponding to Figures \ref{case1}, \ref{case3}, \ref{case2}, \ref{case4}, \ref{case7}, \ref{case6}, \ref{case5}, and \ref{case8}.
\end{prop}

\begin{proof} We construct the degenerations combinatorially by gluing 5 triangles such that no 3 triangles meet in a line (each triangle representing a projective plane). It is not hard to see that there are eight possible cases. To be precise, if there is a 5-point, then the configuration of the degeneration is Figure \ref{case5}. If there is a 4-point, then there are 2 cases, one is the degeneration of the 4-point surface with a plane (Figure \ref{case6}), the other one is the degeneration of a 4-point quintic surface (Figure \ref{case8}). If there is a 3-point, then the configurations of the degenerations are Figures \ref{case3}, \ref{case2}, \ref{case4} and \ref{case7}. Figure \ref{case3} is the degeneration of $\mathbb{CP}^1 \times \mathbb{CP}^1$ with a plane. Figure \ref{case2} is the degeneration of the Veronese surface with a plane. Figures \ref{case4} and \ref{case7} are the degenerations of the Cayley surface and two other planes. If the degeneration contains
only 2-points and 1-points, then the configuration of the degeneration is Figure \ref{case1}.
\end{proof}

\subsection{The Hirzebruch surface $F_1(2,1)$}\label{sec1}
Let $F_1$ be the 1st Hirzebruch surface, i.e., the projection of the vector bundle $\mathcal {O}_{\mathbb{CP}^1}(1)\bigoplus \mathcal {O}_{\mathbb{CP}^1}$.
Denote by $s$ the holomorphic section of $\mathcal {O}_{\mathbb{CP}^1}(1)$, and by $E_0\subset F_1$ the image of the section $(s,1)$ of $\mathcal {O}_{\mathbb{CP}^1}(1)\bigoplus \mathcal {O}_{\mathbb{CP}^1}$. The Picard group is always generated by the fiber $C$ and $E_0$. Note that $2C+E_0$ is very ample and thus defines an embedding $f_{\mid2C+E_0\mid}: F_1 \to \mathbb{CP}^N$. Let $F_1(2,1)=f_{\mid2C+E_0\mid}(F_1)$. By the constructions in \cite{MRT},
$F_1(2,1)$ degenerates to a union of five planes, as depicted in Figure \ref{case1}.
\vspace{0.2cm}

\begin{figure}[ht]
\begin{center}
\scalebox{0.60}{\includegraphics{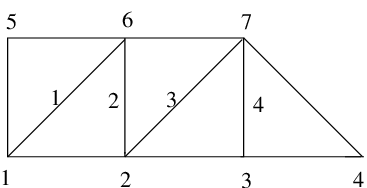}}
\end{center}
\setlength{\abovecaptionskip}{-0.15cm}
\caption{Degeneration of the Hirzebruch surface $F_1(2,1)$}\label{case1}
\end{figure}

\begin{thm}\label{thm-1}
The fundamental group of the Galois cover $\pi_1(X_{\text{Gal}})$ of the Hirzebruch surface $F_1(2,1)$ is trivial.
\end{thm}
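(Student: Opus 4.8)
The plan is to follow the degeneration-regeneration machinery set up in Section \ref{method}, applied to the five-plane arrangement of Figure \ref{case1}. First I would read off the combinatorial data of the degeneration $X_0$: label the five planes $1,\dots,5$ and record which pairs of planes share an edge (an intersection line) and which triples meet at a vertex (a $k$-point). Since this is the Hirzebruch degeneration coming from $F_1(2,1)$ with no three planes meeting in a line, the vertices are $1$-, $2$-, and $3$-points. For each intersection line I introduce a pair of generators $\G_j,\G_j'$ of $G=\pi_1(\cpt-S)$, corresponding to the two lines into which each nodal line regenerates.

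Next I would write down the braid monodromy factorization $\Delta^2_p=\prod_{j=1}^q\varphi(\delta_j)$ by going vertex-by-vertex through the degenerated curve $S_0$, using the regeneration rules recalled in Section \ref{method}: each branch point contributes a relation $\G_j=\G_j'$, each node contributes commutators $[\G_i,\G_j]=[\G_i,\G_j']=e$, and each cusp contributes triple relations $\langle\G_i,\G_j\rangle=e$ (and its variants). I would also add the parasitic commutation relations coming from planes that are disjoint in $X_0$ but whose images cross under projection to $\cpt$, and finally impose the projective relation $\prod_{j=m}^1\G_j'\G_j=e$. This yields a finite presentation of $G$.

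To pass to $\pi_1(X_{\text{Gal}})$, I would use the exact sequence \eqref{M-T}: map each generator to the transposition in $S_5$ determined by the two planes its line separates, identify the kernel $K$, and then impose $\G_j^2=e,\ \G_j'^2=e$. The cleanest route is to exploit the branch-point relations $\G_j=\G_j'$ to halve the generating set, then use the cusp (triple) relations together with $\G_j^2=e$ to turn the braid relations into the Coxeter/symmetric-group relations, and finally show that the resulting group is trivial. Concretely, I expect the combination of the $2$- and $3$-point cusp relations with the commutators to force all the surviving generators to collapse, so that $K$ modulo the squares reduces to the identity; the Coxeter-cover theory for $S_n$ cited at the end of Section \ref{method} is the tool that organizes this collapse.

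The main obstacle, as usual in these computations, will be bookkeeping rather than conceptual: correctly determining the full braid monodromy factorization — in particular getting every parasitic intersection and every regenerated cusp/branch relation right — and then carrying out the group-theoretic simplification without losing or inventing relations. The hard part is verifying that the accumulated relations are exactly enough to annihilate $K/\langle\G_j^2\rangle$; a single missed commutator could leave a spurious nontrivial quotient, so I would cross-check the final triviality against the expected simple-connectivity of $F_1(2,1)$ and against the degree-$\leq 4$ computations of \cite{A-R-T}.
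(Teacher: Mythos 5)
Your outline reproduces the general machinery of Section \ref{method}, and that machinery would indeed work here, but as written your proposal is a plan rather than a proof: the step that constitutes the entire content of the theorem is asserted as an expectation, not established. You never write down the braid monodromy factorization for the degeneration of Figure \ref{case1}, never derive the resulting presentation of $G=\pi_1(\cpt-S)$, and never verify that $G/\langle\Gamma_j^2,\Gamma_j'^2\rangle\cong S_5$, which is what makes $\pi_1(X_{\text{Gal}})=K/\langle\Gamma_j^2\rangle$ trivial. Saying ``I expect the combination of the $2$- and $3$-point cusp relations with the commutators to force all the surviving generators to collapse'' is exactly the assertion to be proved, and it is not automatic: the identical machinery applied to other degree-$5$ degenerations in this very paper produces nontrivial groups, namely $\Z^8$ for Figure \ref{case2} (Theorem \ref{thm-3}), $(\Z_2^2\ltimes\Z^8)\oplus(\Z_2^2\ltimes\Z^8)$ for Figure \ref{case4} (Theorem \ref{thm-5}), and $\Z_2^2\ltimes\Z^4$ for Figure \ref{case7} (Theorem \ref{thm-6}). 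This also invalidates your proposed cross-check: the Type I and Type II configurations are degenerations of degree-$5$ del Pezzo surfaces, which are rational and simply connected, yet their Galois covers have nontrivial fundamental group; so ``expected simple-connectivity of $F_1(2,1)$'' gives no control whatsoever over $\pi_1(X_{\text{Gal}})$. The answer here can only come out of the explicit computation (or a citation).

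You should also be aware that the paper itself does not perform this computation: its proof of Theorem \ref{thm-1} is the single line ``See \cite[Theorem\,0.1]{MRT}'', where the fundamental groups of Galois covers of Hirzebruch surfaces were determined, by essentially the degeneration--regeneration--braid-monodromy procedure you describe, and where the case $F_1(2,1)$ gives the trivial group. So there are two legitimate routes: the citation, or a self-contained computation in the style of the paper's other seven subsections (explicit factorization, explicit relations, explicit collapse via the Coxeter-cover arguments of \cite{RTV}). Your proposal gestures at the second route but carries out neither.
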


\begin{proof}
See \cite[Theorem\,0.1]{MRT}

\end{proof}

\subsection{The union of $\mathbb{CP}^1 \times \mathbb{CP}^1$ degeneration and a plane}\label{cp1}
In this subsection we investigate the surface whose degeneration is depicted in Figure \ref{case3}, i.e., the union of the $\C\P^1 \times \C\P^1$ type degeneration with a plane.
In the degeneration, one can see the common edge, numbered as $1$.

\begin{figure}[H]
\begin{center}
\scalebox{0.60}{\includegraphics{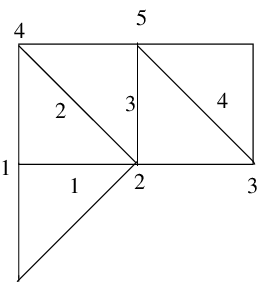}}
\end{center}
\setlength{\abovecaptionskip}{-0.15cm}
\caption{The union of $\C\P^1 \times \C\P^1$ degeneration and a plane}\label{case3}
\end{figure}

\begin{thm}\label{thm-2}
The fundamental group of the Galois cover of the surface with the degeneration as in Figure \ref{case3} is trivial.
\end{thm}

\begin{proof}
The branch curve $S$ in $\mathbb{CP}^2$ is an arrangement of 8 lines. We regenerate each vertex in turn and compute the group $G$.

Vertices 1, 3, and 4 are 1-points; therefore, they give rise to the braids  $Z_{1\; 1^{'}}$, $Z_{4\; 4^{'}}$, and $Z_{2\; 2^{'}}$, respectively, and hence to the following relations in $G$:
\begin{equation}\label{4-1}
\Gamma_1=\Gamma_1',~~~\Gamma_4=\Gamma_4',~~~\Gamma_2=\Gamma_2'.
\end{equation}
Vertex 5  is a 2-point that gives rise to the braid monodromy factors
 $Z^3_{3\; 3^{'},4}$, $(Z_{4\; 4^{'}})^{Z^2_{3\; 3^{'},4}}$ and to the following relations:
\begin{equation}\label{4-2}
\langle\Gamma_3,\Gamma_4\rangle=\langle\Gamma_3',\Gamma_4\rangle=\langle\Gamma_3^{-1}\Gamma_3'\Gamma_3,\Gamma_4\rangle=e,
\end{equation}
\begin{equation}\label{4-3}
\Gamma_4'=\Gamma_4\Gamma_3'\Gamma_3\Gamma_4\Gamma_3^{-1}{\Gamma_3'}^{-1}\Gamma_4^{-1}.
\end{equation}
Vertex 2 is an outer 3-point. The braid monodromy corresponding to this 3-point is:
\begin{eqnarray} \label{4-4}
{\widetilde{\Delta_2}} = Z_{1 \;1', 3\;3'}^2 \cdot Z_{1',2\;2'}^3\cdot(Z_{1\;1'})^{Z_{1',2\;2'}^2}\cdot(Z_{2\;2',3}^3)^{Z_{1',2\;2'}^2}
\cdot(Z_{3\;3'})^{Z_{2\;2',3}^2 Z_{1',2\;2'}^2}. \nonumber
\end{eqnarray}
${\widetilde{\Delta_2}}$ thus gives rise to the following relations:
\begin{equation}\label{4-5}
\langle\Gamma_1',\Gamma_2\rangle=\langle\Gamma_1',\Gamma_2'\rangle=\langle\Gamma_1',\Gamma_2^{-1}\Gamma_2'\Gamma_2\rangle=e,
\end{equation}
\begin{equation}\label{4-6}
\Gamma_1=\Gamma_2'\Gamma_2\Gamma_1'\Gamma_2^{-1}{\Gamma_2'}^{-1},
\end{equation}
\begin{equation}\label{4-7}
\langle\Gamma_3,\Gamma_2'\Gamma_2\Gamma_1'\Gamma_2{\Gamma_1'}^{-1}\Gamma_2^{-1}{\Gamma_2'}^{-1}\rangle=
\langle\Gamma_3,\Gamma_2'\Gamma_2\Gamma_1'\Gamma_2'{\Gamma_1'}^{-1}\Gamma_2^{-1}{\Gamma_2'}^{-1}\rangle=
\langle\Gamma_3,\Gamma_2'\Gamma_2\Gamma_1'\Gamma_2^{-1}\Gamma_2'\Gamma_2{\Gamma_1'}^{-1}\Gamma_2^{-1}{\Gamma_2'}^{-1}\rangle=e,
\end{equation}
\begin{equation}\label{4-8a}
\Gamma_3'=\Gamma_3\Gamma_2'\Gamma_2\Gamma_1'\Gamma_2'\Gamma_2{\Gamma_1'}^{-1}\Gamma_2^{-1}{\Gamma_2'}^{-1}\Gamma_3
\Gamma_2'\Gamma_2\Gamma_1'\Gamma_2^{-1}{\Gamma_2'}^{-1}{\Gamma_1'}^{-1}\Gamma_2^{-1}{\Gamma_2'}^{-1}\Gamma_3^{-1},
\end{equation}
\begin{equation}\label{4-8b}
[\Gamma_1,\Gamma_3]=[\Gamma_1,\Gamma_3']=[\Gamma_1',\Gamma_3]=[\Gamma_1',\Gamma_3']=e.
\end{equation}
We also have the following parasitic and projective relations:
\begin{equation}\label{4-9}
[\Gamma_1,\Gamma_4]=[\Gamma_1,\Gamma_4']=[\Gamma_1',\Gamma_4]=[\Gamma_1',\Gamma_4']=e,
\end{equation}
\begin{equation}\label{4-10}
[\Gamma_2,\Gamma_4]=[\Gamma_2,\Gamma_4']=[\Gamma_2',\Gamma_4]=[\Gamma_2',\Gamma_4']=e,
\end{equation}
\begin{equation}\label{4-11}
\Gamma_4'\Gamma_4\Gamma_3'\Gamma_3\Gamma_2'\Gamma_2\Gamma_1'\Gamma_1=e.
\end{equation}

Relation \eqref{4-3} is simplified by \eqref{4-1} and \eqref{4-2}:
\begin{eqnarray*}
 & \Gamma_4=\Gamma_4\Gamma_3'\Gamma_3\Gamma_4\Gamma_3^{-1}{\Gamma_3'}^{-1}\Gamma_4^{-1} \Rightarrow \Gamma_4=\Gamma_3'\Gamma_3\Gamma_4\Gamma_3^{-1}{\Gamma_3'}^{-1} \Rightarrow \\ & {\Gamma_3'}^{-1}\Gamma_4\Gamma_3'=\Gamma_3\Gamma_4\Gamma_3^{-1} \Rightarrow \Gamma_4\Gamma_3'\Gamma_4^{-1}=\Gamma_4^{-1}\Gamma_3\Gamma_4 \nonumber
\end{eqnarray*}
and we get
\begin{equation}\label{4-12}
\Gamma_3'=\Gamma_4^{-2}\Gamma_3\Gamma_4^2.
\end{equation}
Substituting this expression in the other relations, we get the simplified presentation of $G$:
$$\Gamma_1=\Gamma_1',\ \Gamma_2=\Gamma_2',\ \Gamma_4=\Gamma_4',\  \Gamma_3'=\Gamma_4^{-2}\Gamma_3\Gamma_4^2,$$
$$\langle\Gamma_1,\Gamma_2\rangle=\langle\Gamma_2,\Gamma_3\rangle=\langle\Gamma_3,\Gamma_4\rangle=e,$$
$$[\Gamma_1,\Gamma_3]=[\Gamma_1,\Gamma_4]=[\Gamma_2,\Gamma_4]=e,$$
$$\Gamma_1 = \Gamma_2^2\Gamma_1\Gamma_2^{-2},$$
$$\Gamma_3\Gamma_4^2\Gamma_3\Gamma_2^2\Gamma_1^2=e.$$

Then $\tilde{G}$ admits the following presentation
\begin{equation}
\Gamma_1^2=\Gamma_2^2=\Gamma_3^2=\Gamma_4^2=e,
\end{equation}
$$\langle\Gamma_1,\Gamma_2\rangle=\langle\Gamma_2,\Gamma_3\rangle=\langle\Gamma_3,\Gamma_4\rangle=e,$$
$$[\Gamma_1,\Gamma_3]=[\Gamma_1,\Gamma_4]=[\Gamma_2,\Gamma_4]=e.$$
Since $\tilde{G}=\{\Gamma_1, \Gamma_2,\Gamma_3,\Gamma_4 | ~~S_5~~ \text{type relations}\}\cong S_5$, then $\pi_1(X_{\text{Gal}})$ is trivial.
\end{proof}

\subsection{The union of the Veronese $V_2$ degeneration and a plane}\label{sec2}
We denote by $V_2$ the Veronese surface of order $2$.
In this subsection we introduce the degeneration of a union of the Veronese surface $V_2$ and a plane. The degeneration of this surface is a union of five planes, where $V_2$ and the plane are united along the edge, numbered as 1, see Figure \ref{case2}.

We note that the surface $V_2$ is atypical in different algebraic-geometrical theories, for example, it is the exception case for Chisini's conjecture \cite{Chisini}.

\begin{figure}[H]
\begin{center}
\scalebox{0.50}{\includegraphics{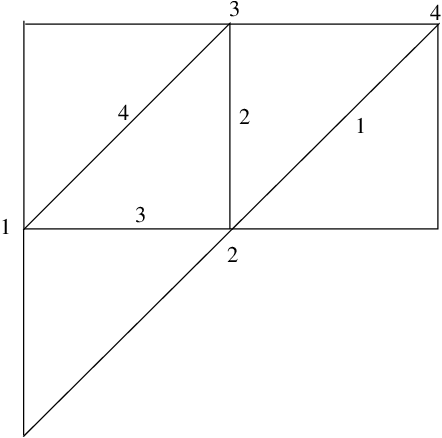}}
\end{center}
\setlength{\abovecaptionskip}{-0.15cm}
\caption{The union of $V_2$ degeneration and a plane}\label{case2}
\end{figure}

\begin{thm}\label{thm-3}
The fundamental group of the Galois cover of the surface with the degeneration as in Figure \ref{case2} is not trivial.
\end{thm}

\begin{proof}
The branch curve $S$ in $\mathbb{CP}^2$ is an arrangement of 8 lines. We regenerate each vertex in turn and compute the group $G$.

Vertex 4 is a 1-point that gives rise to braid  $Z_{1 \; 1^{'}}$, and derives the following relation in $G$:
\begin{equation}\label{3-1}
\Gamma_1=\Gamma_1'.
\end{equation}
Vertex 3 (resp. 1) is a 2-point that gives rise to the braid monodromy factors
$Z^3_{2\; 2^{'},4}$ and $(Z_{4\; 4^{'}})^{Z^2_{2\;2^{'},4}}$ (resp. $Z^3_{3^{'},4\;4^{'}}$ and $(Z_{3\;3^{'}})^{Z^2_{3^{'},4\;4^{'}}}$).
 These braids yield the following relations:
\begin{equation}\label{3-2}
\langle\Gamma_3,\Gamma_4\rangle=\langle\Gamma_3',\Gamma_4\rangle=\langle\Gamma_3^{-1}\Gamma_3'\Gamma_3,
\Gamma_4\rangle=e,
\end{equation}
\begin{equation}\label{3-3}
\Gamma_4'=\Gamma_4\Gamma_3'\Gamma_3\Gamma_4\Gamma_3^{-1}\Gamma_3'^{-1}\Gamma_4^{-1},
\end{equation}
\begin{equation}\label{3-4}
\langle\Gamma_2',\Gamma_4\rangle=\langle\Gamma_2',\Gamma_4'\rangle=\langle\Gamma_2',\Gamma_4^{-1}
\Gamma_4'\Gamma_4\rangle=e,
\end{equation}
\begin{equation}\label{3-5}
\Gamma_2=\Gamma_4'\Gamma_4\Gamma_2'\Gamma_4^{-1}{\Gamma_4'}^{-1}.
\end{equation}
\uThreePointOuter{2}{1}{2}{3}{union-vert2}
We also have the following parasitic and projective relations:
\begin{equation}\label{3-12}
[\Gamma_1,\Gamma_4]=[\Gamma_1,\Gamma_4']=[\Gamma_1',\Gamma_4]=[\Gamma_1',\Gamma_4']=e,
\end{equation}
\begin{equation}\label{3-13}
\Gamma_4'\Gamma_4\Gamma_3'\Gamma_3\Gamma_2'\Gamma_2\Gamma_1'\Gamma_1=e.
\end{equation}

We consider now $\tilde{G}$. Using \eqref{3-1}, \eqref{union-vert2-2}, and  \eqref{union-vert2-3}, we can get $\Gamma_2=\Gamma_2'$. Then by \eqref{union-vert2-5}, we get also $\Gamma_3=\Gamma_3'$.  From \eqref{3-3} we get $\Gamma_4=\Gamma_4'$.

The relations in $\tilde{G}$ are now
\begin{equation}
\Gamma_1^2=\Gamma_2^2=\Gamma_3^2=\Gamma_4^2=e,
\end{equation}
 \begin{equation}
\langle\Gamma_1,\Gamma_2\rangle=\langle\Gamma_2,\Gamma_3\rangle=\langle\Gamma_2,\Gamma_4\rangle=
\langle\Gamma_3,\Gamma_4\rangle=e,
\end{equation}
\begin{equation}
[\Gamma_1,\Gamma_3]=[\Gamma_1,\Gamma_4]=e.
\end{equation}
Since the relation $[\Gamma_2\Gamma_3\Gamma_2,\Gamma_4]=e$ is missing, it means that $\pi_1(X_{\text{Gal}})$  is not trivial.

\end{proof}

\subsection{The union of the Cayley degeneration and two planes}\label{sec4}
The classification of singular cubic surfaces in $\C\P^3$ was done
in the 1860s, by Schl\"{a}fli \cite{Sch} and Cayley \cite{Cay}.
Surface XVI in Cayley's classification is now called the Cayley
cubic, and when embedded in $\C\P^3$, it is defined by the following
equation:
$$
4 (X^3 + Y^3 + Z^3 + W^3) - (X + Y + Z + W)^3=0.
$$
It has four singularities, which are ordinary double points.
Cayley noticed that this surface is a unique cubic surface having four
ordinary double points, which is the maximal possible number of
double points for a cubic surface (see, for example, Salmon's book
\cite{Sal}).

In this subsection we introduce two cases of a degeneration that is a union of the Cayley surface and two planes. We call them Type I (Figure \ref{case4}) and Type II (Figure \ref{case7}).

\subsubsection{Type I}
 In this subsection we consider the union of the Cayley surface and two planes (Type I).
 The degeneration of this surface is a union of five planes, see Figure \ref{case4}.

 The Hilbert scheme of del Pezzo surfaces of degree 5 contains configurations of planes as in Figure \ref{case4} (degenerations of Type I). Indeed, the rational normal scroll $F$ degenerates to the 4 ``top'' planes in Figure \ref{case4} (this is a toric degeneration, corresponding to the subdivision of the $(1,2)$--rectangle). Then the ``bottom'' length 2 polyline $34$ in Figure \ref{case4} is a conic $C$, which degenerates here in two lines corresponding to the segments (edges) $4$ and $5$. The plane $P$ of $C$ becomes the plane spanned by edges $4$ and $5$, which is just the ``bottom plane'' in Figure \ref{case4}.

\begin{figure}[H]
\begin{center}
\scalebox{0.50}{\includegraphics{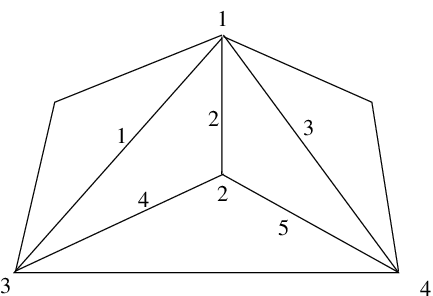}}
\end{center}
\setlength{\abovecaptionskip}{-0.15cm}
\caption{Degeneration of Type I}\label{case4}
\end{figure}

\begin{thm}\label{thm-5}
The fundamental group of the Galois cover of the surface with the degeneration as in Figure \ref{case4} is trivial.
\end{thm}

\begin{proof}
The branch curve $S$ in $\mathbb{CP}^2$ is an arrangement of 10 lines. We regenerate each vertex in turn and compute the group $G$.

\uTwoPoint{3}{1}{4}{C-vert3}
\uTwoPoint{4}{3}{5}{C-vert4}
\uThreePointOuter{1}{1}{2}{3}{C-vert1}
\uThreePointInner{2}{2}{4}{5}{C-vert2}
We also have the following parasitic and projective relations:
\begin{equation}\label{C-par1}
[\Gamma_1,\Gamma_5]=[\Gamma_1,\Gamma_5']=[\Gamma_1',\Gamma_5]=[\Gamma_1',\Gamma_5']=e,
\end{equation}
\begin{equation}\label{C-par2}
[\Gamma_3,\Gamma_4]=[\Gamma_3,\Gamma_4']=[\Gamma_3',\Gamma_4]=[\Gamma_3',\Gamma_4']=e,
\end{equation}
\begin{equation}
\Gamma_5'\Gamma_5\Gamma_4'\Gamma_4\Gamma_3'\Gamma_3\Gamma_2'\Gamma_2\Gamma_1'\Gamma_1=e.
\end{equation}

Equating \eqref{C-vert2-3} and \eqref{C-vert2-5} in $\tilde{G}$, we can get $\Gamma_2=\Gamma_2'$.
Then by \eqref{C-vert1-3}, we get $\Gamma_1=\Gamma_1'$. A direct result from \eqref{C-vert1-5} is that $\Gamma_3=\Gamma_3'$. Looking at the relations from the 2-points, we can deduce also that
$\Gamma_4=\Gamma_4'$ and $\Gamma_5=\Gamma_5'$.

The relations in $\tilde{G}$ are now
\begin{equation}
\Gamma_1^2=\Gamma_2^2=\Gamma_3^2=\Gamma_4^2=\Gamma_5^2=e,
\end{equation}
 \begin{equation}
\langle\Gamma_1,\Gamma_2\rangle=\langle\Gamma_1,\Gamma_4\rangle=\langle\Gamma_2,\Gamma_3\rangle=
\langle\Gamma_2,\Gamma_4\rangle=\langle\Gamma_2,\Gamma_5\rangle=\langle\Gamma_3,\Gamma_5\rangle=
\langle\Gamma_4,\Gamma_5\rangle=e,
\end{equation}
\begin{equation}
[\Gamma_1,\Gamma_3]=[\Gamma_1,\Gamma_5]=[\Gamma_3,\Gamma_4]=e,
\end{equation}
\begin{equation}
\Gamma_5=\Gamma_2\Gamma_4\Gamma_2.
\end{equation}
We can get the following relations too:
\begin{equation}
[\Gamma_1,\Gamma_2\Gamma_4\Gamma_2]=[\Gamma_3,\Gamma_2\Gamma_5\Gamma_2]=e.
\end{equation}
According to these relations, $\tilde{G}$ with the generators $\Gamma_1,\Gamma_2,\Gamma_3,\Gamma_4,\Gamma_5$ is isomorphic to $S_5$, so $\pi_1(X_{\text{Gal}})$  is trivial.
\end{proof}

\subsubsection{Type II}\label{typeII}
In this subsection we consider the union of the Cayley surface and two planes (Type II).
 The degeneration of this surface is a union of five planes, where the common edges of the Cayley degeneration and one of the planes is 4, and two planes have a common edge that is 5, see Figure \ref{case7}.

The Hilbert scheme of del Pezzo surfaces of degree 5 contains configurations of planes as in Figure  \ref{case7} (degenerations of Type II). Indeed, the Veronese  $V$  degenerates to the 4 ''rightmost" planes (except a plane with vertices 123) in Figure
 \ref{case7} (this is a toric degeneration).
Then in vertex 3, the length 2 polyline $13$ of $V$ corresponds to a conic $C$, which degenerates here in two lines corresponding to the segments (edges) $1$ and $2$. The plane $P$ of $C$ becomes the plane spanned by edges $1$ and $2$, which is just a plane with vertices 123 in Figure  \ref{case7}.

\begin{figure}[H]
\begin{center}
\scalebox{0.50}{\includegraphics{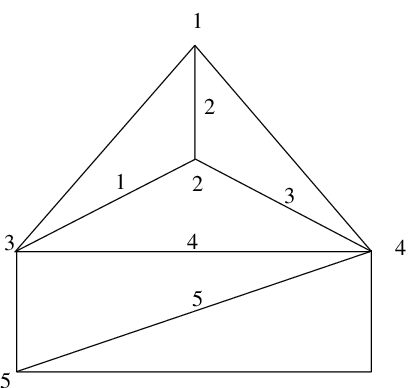}}
\end{center}
\setlength{\abovecaptionskip}{-0.15cm}
\caption{Degeneration of Type II}\label{case7}
\end{figure}

\begin{thm}\label{thm-6}
The fundamental group of the Galois cover of the surface with the degeneration as in Figure \ref{case7} is trivial.
\end{thm}

\begin{proof}

The branch curve $S$ in $\mathbb{CP}^2$ is an arrangement of 10 lines. We regenerate each vertex in turn and compute the group $G$.

Vertices 1 and 5 give rise to the braids  $Z_{2\; 2^{'}}$ and $Z_{5\; 5^{'}}$ respectively, and hence to the relations:
\begin{equation}\label{8-1}
\Gamma_2=\Gamma_2',~~~~~~\Gamma_5=\Gamma_5'.
\end{equation}
\uTwoPoint{3}{1}{4}{II-vert3}
\uThreePointOuter{4}{3}{4}{5}{II-vert4}
\uThreePointInner{2}{1}{2}{3}{II-vert2}
We also have the following parasitic and projective relations:
\begin{equation}
[\Gamma_1,\Gamma_5]=[\Gamma_1',\Gamma_5]=[\Gamma_1,\Gamma_5']=[\Gamma_1',\Gamma_5']=e,
\end{equation}
\begin{equation}
[\Gamma_2,\Gamma_4]=[\Gamma_2',\Gamma_4]=[\Gamma_2,\Gamma_4']=[\Gamma_2',\Gamma_4']=e,
\end{equation}
\begin{equation}
[\Gamma_2,\Gamma_5]=[\Gamma_2',\Gamma_5]=[\Gamma_2,\Gamma_5']=[\Gamma_2',\Gamma_5']=e,
\end{equation}
\begin{equation}
\Gamma_5'\Gamma_5\Gamma_4'\Gamma_4\Gamma_3'\Gamma_3\Gamma_2'\Gamma_2\Gamma_1'\Gamma_1=e.
\end{equation}

Equating \eqref{II-vert2-3} and \eqref{II-vert2-5} in $\tilde{G}$, we can get $\Gamma_1=\Gamma_1'$.
Substituting this in \eqref{II-vert3-3}, we get easily $\Gamma_4=\Gamma_4'$. Then from \eqref{II-vert4-3} we get also $\Gamma_3=\Gamma_3'$.

Thus the relations in $\tilde{G}$ are
\begin{equation}
\Gamma_1^2=\Gamma_2^2=\Gamma_3^2=\Gamma_4^2=\Gamma_5^2=e,
\end{equation}
\begin{equation}
\langle\Gamma_1,\Gamma_2\rangle=\langle\Gamma_1,\Gamma_3\rangle=\langle\Gamma_1,\Gamma_4\rangle=
\langle\Gamma_2,\Gamma_3\rangle=\langle\Gamma_3,\Gamma_4\rangle=\langle\Gamma_4,\Gamma_5\rangle=e,
\end{equation}
\begin{equation}
[\Gamma_1,\Gamma_5]=[\Gamma_2,\Gamma_4]=[\Gamma_2,\Gamma_5]=[\Gamma_3,\Gamma_5]=e,
\end{equation}
\begin{equation}
\Gamma_3=\Gamma_1\Gamma_2\Gamma_1.
\end{equation}
We can get the following relation too:
\begin{equation}
[\Gamma_4,\Gamma_1\Gamma_3\Gamma_1]=e.
\end{equation}
 This means that $\tilde{G}$ with the generators $\Gamma_1,\Gamma_2,\Gamma_3,\Gamma_4,\Gamma_5$ is isomorphic to $S_5$, so $\pi_1(X_{\text{Gal}})$  is trivial.
\end{proof}

\subsection{A union of the 4-point quartic degeneration and a plane}\label{extended-4}
In this subsection, we take a degeneration of a quartic surface to a plane arrangement with a $4$-point.

The Hilbert scheme of del Pezzo surfaces of degree 5 contains reducible surfaces that consist in a general degree 4 complete intersection $F$ of type $(2,2)$ in $\mathbb{CP}^ 4$ (which is itself a del Pezzo surface), plus a plane $P$ meeting $F$ along a line.

We can show that the configuration shown in Figure \ref{case6} is a limit of smooth del Pezzo surfaces of degree 5.
Indeed, the surface $F$ can degenerate to the union of 4 planes filling up the subdivided square in Figure \ref{case6}:
Simply degenerate the two quadrics cutting out $F$ in two general quadric cones with the same vertex (which is then the 4-tuple point $5$ that is common to the 4 planes). In this degeneration the number of lines contained in $F$, which is 16, is mapped to the following configuration of lines: Take a general quadric $Q$ in $\mathbb{CP}^4$, it cuts each of the 4 lines through 4-tuple point $5$ in two points; then on each of the 4 planes take the 4 lines pairwise joining the pairs of points not on the same line through 4-tuple point $5$, these are the 16 limits in question. Then a general plane through a line on $F$ goes to a plane like the ``rightmost top'' plane in Figure \ref{case6}.

\begin{figure}[H]
\begin{center}
\scalebox{0.60}{\includegraphics{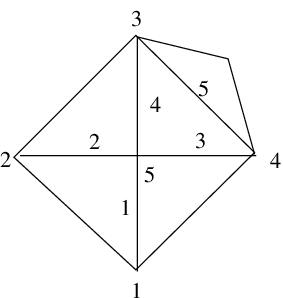}}
\end{center}
\setlength{\abovecaptionskip}{-0.15cm}
\caption{A union of the 4-point quartic degeneration and a plane}\label{case6}
\end{figure}

\begin{thm}\label{thm-7}
The fundamental group of the Galois cover of the surface with the degeneration as in Figure \ref{case6} is trivial.
\end{thm}

\begin{proof}
The branch curve $S$ in $\mathbb{CP}^2$ is an arrangement of 10 lines. We regenerate each vertex in turn and compute the group $G$.

Vertices 1 and 2 are 1-points, giving the braids  $Z_{1\; 1^{'}}$ and $Z_{2\; 2^{'}}$ respectively, and hence the following relations in $G$:
\begin{equation}\label{7-1}
\Gamma_1=\Gamma_1',~~~~~~\Gamma_2=\Gamma_2'.
\end{equation}
Vertex 3  is a 2-point and it gives the braid monodromy factors
 $Z^3_{4\;4^{'},5}$ and $(Z_{5\;5^{'}})^{Z^2_{4\;4^{'},5}}$. The relations in $G$ are:
\begin{equation}\label{7-2}
\langle\Gamma_4,\Gamma_5\rangle=\langle\Gamma_4',\Gamma_5\rangle=\langle\Gamma_4^{-1}\Gamma_4'\Gamma_4,\Gamma_5\rangle=e,
\end{equation}
\begin{equation}\label{7-3}
\Gamma_5'=\Gamma_5\Gamma_4'\Gamma_4\Gamma_5\Gamma_4^{-1}{\Gamma_4'}^{-1}\Gamma_5^{-1}.
\end{equation}
Vertex 4 is also a 2-point and it gives the braid monodromy factors
 $Z^3_{3\;3^{'},5}$ and $(Z_{5\;5^{'}})^{Z^2_{3\;3^{'},5}}$. The relations in $G$ are:
\begin{equation}\label{7-4}
\langle\Gamma_3,\Gamma_5\rangle=\langle\Gamma_3',\Gamma_5\rangle=\langle\Gamma_3^{-1}\Gamma_3'\Gamma_3,\Gamma_5\rangle=e,
\end{equation}
\begin{equation}\label{7-5}
\Gamma_5'=\Gamma_5\Gamma_3'\Gamma_3\Gamma_5\Gamma_3^{-1}{\Gamma_3'}^{-1}\Gamma_5^{-1}.
\end{equation}
The braid monodromy factors corresponding to the 4-point (vertex 5) were computed in \cite{Ogata}.
These braids give rise to the following relations in $G$:
\begin{equation}\label{7-6}
\langle\Gamma_1',\Gamma_2\rangle=\langle\Gamma_1',\Gamma_2'\rangle=\langle\Gamma_1',\Gamma_2^{-1}\Gamma_2'\Gamma_2\rangle=e,
\end{equation}
\begin{equation}\label{7-7}
\langle\Gamma_3,\Gamma_4\rangle=\langle\Gamma_3',\Gamma_4\rangle=\langle\Gamma_3^{-1}\Gamma_3'\Gamma_3,\Gamma_4\rangle=e,
\end{equation}
\begin{equation}\label{7-8}
[\Gamma_2'\Gamma_2\Gamma_1'\Gamma_2^{-1}{\Gamma_2'}^{-1},\Gamma_4] = e,
\end{equation}
\begin{equation}\label{7-9}
[\Gamma_2'\Gamma_2\Gamma_1'\Gamma_2^{-1}{\Gamma_2'}^{-1},\Gamma_3^{-1}{\Gamma_3'}^{-1}\Gamma_4^{-1}\Gamma_4'\Gamma_4\Gamma_3'\Gamma_3] = e,
\end{equation}
\begin{equation}\label{7-10}
\langle\Gamma_1,\Gamma_2\rangle=\langle\Gamma_1,\Gamma_2'\rangle=\langle\Gamma_1,\Gamma_2^{-1}\Gamma_2'\Gamma_2\rangle=e,
\end{equation}
\begin{equation}\label{7-11}
\langle\Gamma_3,\Gamma_4^{-1}\Gamma_4'\Gamma_4\rangle=\langle\Gamma_3',\Gamma_4^{-1}\Gamma_4'\Gamma_4\rangle=\langle\Gamma_3^{-1}\Gamma_3'\Gamma_3,\Gamma_4^{-1}\Gamma_4'\Gamma_4\rangle=e,
\end{equation}
\begin{equation}\label{7-12}
[\Gamma_2'\Gamma_2\Gamma_1\Gamma_2^{-1}{\Gamma_2'}^{-1},\Gamma_4^{-1}\Gamma_4'\Gamma_4] = e,
\end{equation}
\begin{equation}\label{7-13}
[\Gamma_2'\Gamma_2\Gamma_1\Gamma_2^{-1}{\Gamma_2'}^{-1}, \Gamma_3^{-1}{\Gamma_3'}^{-1}\Gamma_4^{-1}{\Gamma_4'}^{-1}\Gamma_4\Gamma_4'\Gamma_4\Gamma_3'\Gamma_3] = e,
\end{equation}
\begin{equation}\label{7-14}
\Gamma_2'\Gamma_2\Gamma_1'\Gamma_2\Gamma_1'^{-1}\Gamma_2^{-1}{\Gamma_2'}^{-1} = \Gamma_4\Gamma_3'\Gamma_4^{-1},
\end{equation}
\begin{equation}\label{7-15}
\Gamma_2'\Gamma_2\Gamma_1'\Gamma_2'\Gamma_1'^{-1}\Gamma_2^{-1}{\Gamma_2'}^{-1} = \Gamma_4\Gamma_3'\Gamma_3{\Gamma_3'}^{-1}\Gamma_4^{-1},
\end{equation}
\begin{equation}\label{7-16}
\Gamma_2'\Gamma_2\Gamma_1\Gamma_2\Gamma_1^{-1}\Gamma_2^{-1}{\Gamma_2'}^{-1} = \Gamma_4^{-1}\Gamma_4'\Gamma_4\Gamma_3'\Gamma_4^{-1}{\Gamma_4'}^{-1}\Gamma_4,
\end{equation}
\begin{equation}\label{7-17}
\Gamma_2'\Gamma_2\Gamma_1\Gamma_2'\Gamma_1^{-1}\Gamma_2^{-1}{\Gamma_2'}^{-1} = \Gamma_4^{-1}\Gamma_4'\Gamma_4\Gamma_3'\Gamma_3\Gamma_3'^{-1}\Gamma_4^{-1}{\Gamma_4'}^{-1}\Gamma_4.
\end{equation}
We also have the following parasitic and projective relations:
\begin{equation}\label{7-18}
[\Gamma_1, \Gamma_5]=[\Gamma_1', \Gamma_5]=[\Gamma_1, \Gamma_5']=[\Gamma_1', \Gamma_5']=e,
\end{equation}
\begin{equation}\label{7-19}
[\Gamma_2, \Gamma_5]=[\Gamma_2', \Gamma_5]=[\Gamma_2, \Gamma_5']=[\Gamma_2', \Gamma_5']=e,
\end{equation}
\begin{equation}\label{7-20}
\Gamma_5'\Gamma_5\Gamma_4'\Gamma_4\Gamma_3'\Gamma_3\Gamma_2'\Gamma_2\Gamma_1'\Gamma_1=e.
\end{equation}

By \eqref{7-14} and \eqref{7-15}, we have $\Gamma_3=\Gamma_3'$.

Combining it with \eqref{7-7}, \eqref{7-14}, and \eqref{7-15}, we get
\begin{equation}\label{7-21}
\Gamma_2'\Gamma_2\Gamma_1'\Gamma_2\Gamma_1'^{-1}\Gamma_2^{-1}{\Gamma_2'}^{-1} = \Gamma_3^{-1}\Gamma_4\Gamma_3.
\end{equation}

By \eqref{7-11} and \eqref{7-16} we have
\begin{equation}\label{7-22}
\Gamma_2'\Gamma_2\Gamma_1'\Gamma_2\Gamma_1'^{-1}\Gamma_2^{-1}{\Gamma_2'}^{-1} = \Gamma_3^{-1}\Gamma_4^{-1}\Gamma_4'\Gamma_4\Gamma_3.
\end{equation}
It follows that $\Gamma_4=\Gamma_4'$.

Thus, we get the following relations in $G$:
\begin{equation}\label{7-23}
\Gamma_1=\Gamma_1',~~~\Gamma_2=\Gamma_2',~~~\Gamma_3=\Gamma_3',~~~\Gamma_4
=\Gamma_4',~~~\Gamma_5'=\Gamma_4^{-2}\Gamma_5\Gamma_4^2,
\end{equation}
\begin{equation}\label{7-24}
\Gamma_2\Gamma_1\Gamma_2^{-1}=\Gamma_4\Gamma_3\Gamma_4^{-1},
\end{equation}
\begin{equation}\label{7-25}
\langle\Gamma_1,\Gamma_2\rangle=\langle\Gamma_3,\Gamma_4\rangle
=\langle\Gamma_3,\Gamma_5\rangle=\langle\Gamma_4,\Gamma_5\rangle=e,
\end{equation}
\begin{equation}\label{7-26}
[\Gamma_1,\Gamma_5]=[\Gamma_2,\Gamma_5]=
[\Gamma_2^2\Gamma_1\Gamma_2^{-2},\Gamma_4]=
[\Gamma_2^2\Gamma_1\Gamma_2^{-2},\Gamma_3^{-2}\Gamma_4\Gamma_3^2]=e,
\end{equation}
\begin{equation}
[\Gamma_4^2\Gamma_1\Gamma_4^{-2},\Gamma_5]=
[\Gamma_4^2\Gamma_2\Gamma_4^{-2},\Gamma_5]=e,
\end{equation}
\begin{equation}\label{7-27}
\Gamma_5\;\Gamma_4^2\;\Gamma_5\;\Gamma_3^2\;\Gamma_2^2\;\Gamma_1^2=e.
\end{equation}

In $\tilde{G}$, it is easy to see that the generators are $\Gamma_1,\Gamma_2,\Gamma_4,\Gamma_5$, and the relations are the following:
\begin{equation}\label{7-28}
\Gamma_1^2=\Gamma_2^2=\Gamma_4^2=\Gamma_5^2=e,
\end{equation}
\begin{equation}\label{7-29}
\langle\Gamma_1,\Gamma_2\rangle=\langle\Gamma_2,\Gamma_4\rangle=\langle\Gamma_4,\Gamma_5\rangle=e,
\end{equation}
\begin{equation}\label{7-30}
[\Gamma_1,\Gamma_4]=[\Gamma_1,\Gamma_5]=[\Gamma_2,\Gamma_5]=e.
\end{equation}

It is easy to see that $\tilde{G}\cong S_5$, thus $\pi_1(X_{\text{Gal}})$ is trivial.

\end{proof}

\subsection{A 5-point quintic degeneration}\label{central}
In this subsection we consider a quintic whose degeneration is depicted in Figure \ref{case5}. This degeneration gives a 5-point, in this case an intersection of five lines and also five planes.
According to \cite{CCFM}, the configuration in Figure \ref{case5} is a Zappatic surface of type $E_5$.
It is well-known that a general del Pezzo $S$ of degree $n$ in $\mathbb{CP}^ n$ can degenerate to a configuration of points of type $E_n, n=3,\ldots, 9$. Firstly, we degenerate $S$ \cite{CLM} to the cone over a general hyperplane section (elliptic curve) of $S$.
Secondly, we degenerate the hyperplane section to a cycle of lines.

\begin{figure}[htbp]
\begin{center}
\scalebox{0.68}{\includegraphics{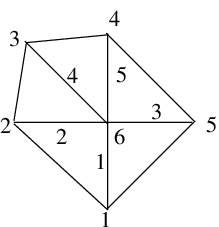}}
\end{center}
\setlength{\abovecaptionskip}{-0.3cm}
\caption{A 5-point quintic degeneration}\label{case5}

\end{figure}

The regeneration and the related braid monodromy of the 5-points were done in \cite{FT08}.  We use the result from \cite[Corollary\;2.5]{FT08} to give the braid monodromy relating to the 5-point.

\begin{thm}\label{thm-8}
The fundamental group of the Galois cover of the surface with a 5-point quintic degeneration as in Figure \ref{case5} is trivial.
\end{thm}

\begin{proof}
The branch curve $S$ in $\mathbb{CP}^2$ is an arrangement of 10 lines. We regenerate each vertex in turn and compute the group $G$.

Vertices 1, 2, 3, 4, and 5 are 1-points, therefore the related braid monodromy factors are  $Z_{1\;1^{'}}$, $Z_{2\;2^{'}}$, $Z_{4\;4^{'}}$, $Z_{5\;5^{'}}$, and $Z_{3\;3^{'}}$, respectively, and hence we have the following relations:
\begin{equation}\label{6-1}
\Gamma_1=\Gamma_1',~~~\Gamma_2=\Gamma_2',~~~\Gamma_3=\Gamma_3',~~~\Gamma_4=\Gamma_4',~~~\Gamma_5=\Gamma_5'.
\end{equation}
According to \cite[Corollary\;2.5]{FT08},
the braid monodromy corresponding to the 5-point yields the following relations in $G$:
\begin{equation}\label{6-2}
[\Gamma_3,\Gamma_4]=[\Gamma_3',\Gamma_4]=e,
\end{equation}
\begin{equation}\label{6-3}
\langle\Gamma_4',\Gamma_5\rangle=\langle\Gamma_4',\Gamma_5'\rangle=\langle\Gamma_4',\Gamma_5^{-1}\Gamma_5'\Gamma_5\rangle=e,
\end{equation}
\begin{equation}\label{6-4}
\langle\Gamma_2,\Gamma_4\rangle=\langle\Gamma_2',\Gamma_4\rangle=\langle\Gamma_2^{-1}\Gamma_2'\Gamma_2,\Gamma_4\rangle=e,
\end{equation}
\begin{equation}\label{6-5}
[\Gamma_4\Gamma_3\Gamma_4^{-1},\Gamma_5'\Gamma_5\Gamma_4'\Gamma_5^{-1}{\Gamma_5'}^{-1}]=
[\Gamma_4\Gamma_3'\Gamma_4^{-1},\Gamma_5'\Gamma_5\Gamma_4'\Gamma_5^{-1}{\Gamma_5'}^{-1}]=e,
\end{equation}
\begin{equation}\label{6-6}
\Gamma_4\Gamma_2'\Gamma_2\Gamma_4\Gamma_2^{-1}{\Gamma_2'}^{-1}\Gamma_4^{-1}=\Gamma_5'\Gamma_5\Gamma_4'\Gamma_5^{-1}{\Gamma_5'}^{-1},
\end{equation}
\begin{equation}\label{6-7}
[\Gamma_1,\Gamma_4]=[\Gamma_1',\Gamma_4]=
[\Gamma_1,\Gamma_5'\Gamma_5\Gamma_4'\Gamma_5^{-1}{\Gamma_5'}^{-1}]=[\Gamma_1',\Gamma_5'\Gamma_5\Gamma_4'\Gamma_5^{-1}{\Gamma_5'}^{-1}]=e,
\end{equation}
\begin{equation}\label{6-8}
\langle\Gamma_1',\Gamma_2\rangle=\langle\Gamma_1',\Gamma_2'\rangle=\langle\Gamma_1',\Gamma_2^{-1}\Gamma_2'\Gamma_2\rangle=e,
\end{equation}
\begin{equation}\label{6-9}
\langle\Gamma_4\Gamma_3\Gamma_4^{-1},\Gamma_5\rangle=\langle\Gamma_4\Gamma_3'\Gamma_4^{-1},\Gamma_5\rangle=\langle\Gamma_4\Gamma_3^{-1}\Gamma_3'\Gamma_3\Gamma_4^{-1},\Gamma_5\rangle=e,
\end{equation}
\begin{equation}\label{6-10}
\Gamma_2'\Gamma_2\Gamma_1'\Gamma_2\Gamma_1'^{-1}\Gamma_2^{-1}{\Gamma_2'}^{-1}=\Gamma_4^{-1}\Gamma_5\Gamma_4\Gamma_3'
\Gamma_4^{-1}\Gamma_5^{-1}\Gamma_4,
\end{equation}
\begin{equation}\label{6-11}
\Gamma_2'\Gamma_2\Gamma_1'\Gamma_2'\Gamma_1'^{-1}\Gamma_2^{-1}{\Gamma_2'}^{-1}=
\Gamma_4^{-1}\Gamma_5\Gamma_4\Gamma_3'\Gamma_3\Gamma_3'^{-1}\Gamma_4^{-1}\Gamma_5^{-1}\Gamma_4,
\end{equation}
\begin{equation}\label{6-12}
[\Gamma_2'\Gamma_2\Gamma_1'\Gamma_2^{-1}{\Gamma_2'}^{-1},\Gamma_4^{-1}\Gamma_5\Gamma_4]=e,
\end{equation}
\begin{equation}\label{6-13}
[\Gamma_3'\Gamma_3\Gamma_2'\Gamma_2\Gamma_1'\Gamma_2^{-1}{\Gamma_2'}^{-1}\Gamma_3^{-1}\Gamma_3'^{-1},
\Gamma_4^{-1}\Gamma_5^{-1}\Gamma_5'\Gamma_5\Gamma_4]=e,
\end{equation}
\begin{equation}\label{6-14}
\langle\Gamma_1,\Gamma_2\rangle=\langle\Gamma_1,\Gamma_2'\rangle=\langle\Gamma_1,\Gamma_2^{-1}\Gamma_2'\Gamma_2\rangle=e,
\end{equation}
\begin{equation} \label{6-15}
\begin{split}
\langle\Gamma_4\Gamma_3\Gamma_4^{-1},\Gamma_5^{-1}\Gamma_5'\Gamma_5\rangle&=\langle\Gamma_4\Gamma_3'\Gamma_4^{-1},
\Gamma_5^{-1}\Gamma_5'\Gamma_5\rangle\\
&=\langle\Gamma_4\Gamma_3^{-1}\Gamma_3'\Gamma_3\Gamma_4^{-1},\Gamma_5^{-1}\Gamma_5'\Gamma_5\rangle=e,
\end{split}
\end{equation}
\begin{equation}\label{6-16}
\Gamma_2'\Gamma_2\Gamma_1\Gamma_2\Gamma_1^{-1}\Gamma_2^{-1}{\Gamma_2'}^{-1}=
\Gamma_4^{-1}\Gamma_5^{-1}\Gamma_5'\Gamma_5\Gamma_4\Gamma_3'\Gamma_4^{-1}\Gamma_5^{-1}{\Gamma_5'}^{-1}\Gamma_5\Gamma_4,
\end{equation}
\begin{equation}\label{6-17}
\Gamma_2'\Gamma_2\Gamma_1\Gamma_2'\Gamma_1^{-1}\Gamma_2^{-1}{\Gamma_2'}^{-1}=
\Gamma_4^{-1}\Gamma_5^{-1}\Gamma_5'\Gamma_5\Gamma_4\Gamma_3'\Gamma_3\Gamma_3'^{-1}\Gamma_4^{-1}\Gamma_5^{-1}{\Gamma_5'}^{-1}\Gamma_5\Gamma_4,
\end{equation}
\begin{equation}\label{6-18}
[\Gamma_2'\Gamma_2\Gamma_1\Gamma_2^{-1}{\Gamma_2'}^{-1},\Gamma_4^{-1}\Gamma_5^{-1}\Gamma_5'\Gamma_5\Gamma_4]=e,
\end{equation}
\begin{equation}\label{6-19}
[\Gamma_3'\Gamma_3\Gamma_2'\Gamma_2\Gamma_1\Gamma_2^{-1}{\Gamma_2'}^{-1}\Gamma_3^{-1}\Gamma_3'^{-1},\Gamma_4^{-1}\Gamma_5^{-1}
{\Gamma_5'}^{-1}\Gamma_5\Gamma_5'\Gamma_5\Gamma_4]=e.
\end{equation}
We also have the following projective relation:
\begin{equation}\label{6-20}
\Gamma_5'\Gamma_5\Gamma_4'\Gamma_4\Gamma_3'\Gamma_3\Gamma_2'\Gamma_2\Gamma_1'\Gamma_1=e.
\end{equation}

The generators of  $\tilde{G}$ are $\Gamma_1,\Gamma_2,\Gamma_3,\Gamma_4,\Gamma_5$.
The relations are the following:
\begin{equation}
\Gamma_1^2=\Gamma_2^2=\Gamma_3^2=\Gamma_4^2=\Gamma_5^2=e,
\end{equation}
\begin{equation}\label{6-22}
\langle\Gamma_1,\Gamma_2\rangle=\langle\Gamma_2,\Gamma_4\rangle=
\langle\Gamma_3,\Gamma_5\rangle=\langle\Gamma_4,\Gamma_5\rangle=e,
\end{equation}
\begin{equation}\label{6-21}
[\Gamma_1,\Gamma_4]=[\Gamma_1,\Gamma_5]=[\Gamma_3,\Gamma_4]=e,
\end{equation}
\begin{equation}\label{6-23}
\Gamma_1\Gamma_2\Gamma_1=\Gamma_3\Gamma_4\Gamma_5\Gamma_4\Gamma_3.
\end{equation}
We eliminate generator $\Gamma_2$ from the list of generators, so now $\tilde{G}$ is generated by
$\Gamma_1, \Gamma_3, \Gamma_4, \Gamma_5$ and admits these relations:
\begin{equation}
\Gamma_1^2=\Gamma_3^2=\Gamma_4^2=\Gamma_5^2=e,
\end{equation}
\begin{equation}
\langle\Gamma_1,\Gamma_3\rangle=\langle\Gamma_3,\Gamma_5\rangle=\langle\Gamma_4,\Gamma_5\rangle=e,
\end{equation}
\begin{equation}\label{6-21}
[\Gamma_1,\Gamma_4]=[\Gamma_1,\Gamma_5]=[\Gamma_3,\Gamma_4]=e.
\end{equation}

Thus $\tilde{G}\cong S_5$, and it follows that $\pi_1(X_{\text{Gal}})$ is trivial.

\end{proof}

\subsection{A 4-point quintic degeneration}\label{sec7}
In this subsection we consider a quintic that degenerates to a union of planes as shown in Figure \ref{case8}. This degeneration gives a special 4-point, in this case an intersection of four edges.

A rational normal scroll $F$ of degree 5 in $\mathbb{CP}^ 6$ can degenerate to the cone over a hyperplane section of it, which is a rational normal curve $C$ of degree 5 in $\mathbb{CP}^ 5$. Then $C$ can be degenerated to a chain of lines which is well-known. This yields $F$ degenerates to 5 planes as in Figure \ref{case8}.

\begin{figure}[H]
\begin{center}
\scalebox{0.60}{\includegraphics{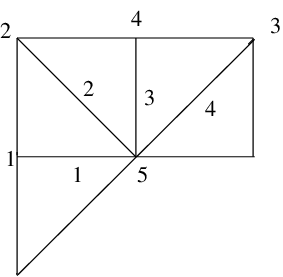}}
\end{center}
\setlength{\abovecaptionskip}{-0.15cm}
\caption{A 4-point quintic degeneration}\label{case8}
\end{figure}

\begin{thm}\label{thm-4}
The fundamental group of the Galois cover of the surface with a 4-point quintic degeneration as in Figure \ref{case8} is trivial.
\end{thm}

\begin{proof}
The branch curve $S$ in $\mathbb{CP}^2$ is an arrangement of 8 lines. We regenerate each vertex in turn and compute the group $G$.

Vertices 1, 2, 3, and 4 are 1-points, which give rise to the braids  $Z_{1 \; 1^{'}}$, $Z_{2 \; 2^{'}}$, $Z_{4\; 4^{'}}$, and $Z_{3 \; 3^{'}}$, respectively, and hence to the following relations in $G$:
\begin{equation}\label{9-1}
\Gamma_1=\Gamma_1',~~~\Gamma_2=\Gamma_2',~~~\Gamma_3=\Gamma_3',~~~\Gamma_4=\Gamma_4'.
\end{equation}
Vertex 5 is an outer $4$-point and it gives rise to the following relations:
	\begin{equation}\label{9-2}
		\langle{\Gamma_3},{\Gamma_4} \rangle = \langle {\Gamma_3'},{\Gamma_4} \rangle= \langle \Gamma_{3}^{-1}\Gamma_{3'}\Gamma_{3}, \Gamma_{4} \rangle =e,
	\end{equation}
	\begin{equation}\label{9-3}
	[\Gamma_{2'}\Gamma_{2}\Gamma_{1}\Gamma_{2}^{-1}\Gamma_{2'}^{-1}, \Gamma_{4}] = [\Gamma_{2'}\Gamma_{2}\Gamma_{1'}\Gamma_{2}^{-1}\Gamma_{2'}^{-1}, \Gamma_{4} ] = e,
	\end{equation}
	\begin{equation}\label{9-4}
		[\Gamma_{2},\Gamma_{4}] = [\Gamma_{2'},\Gamma_{4}] =e,
	\end{equation}
	\begin{equation}\label{9-5} [\Gamma_{3'}\Gamma_{3}\Gamma_{2'}\Gamma_{2}\Gamma_{1}\Gamma_{2}^{-1}\Gamma_{2}'^{-1}\Gamma_{3}^{-1}\Gamma_{3'}^{-1}, \Gamma_{4}^{-1}\Gamma_{4'}\Gamma_{4} ] = 	 [\Gamma_{3'}\Gamma_{3}\Gamma_{2'}\Gamma_{2}\Gamma_{1'}\Gamma_{2}^{-1}\Gamma_{2}'^{-1}\Gamma_{3}^{-1}\Gamma_{3'}^{-1}, \Gamma_{4}^{-1}\Gamma_{4'}\Gamma_{4} ] = e,
	\end{equation}
	\begin{equation}\label{9-6}
	[\Gamma_{3'}\Gamma_{3}\Gamma_{2}\Gamma_{3}^{-1}\Gamma_{3'}^{-1}, \Gamma_{4}^{-1}\Gamma_{4'}\Gamma_{4}] = 	 [\Gamma_{3'}\Gamma_{3}\Gamma_{2'}\Gamma_{3}^{-1}\Gamma_{3'}^{-1}, \Gamma_{4}^{-1}\Gamma_{4'}\Gamma_{4}] = e,
	\end{equation}
	\begin{equation}\label{9-7}
	\Gamma_{4}\Gamma_{3'}\Gamma_{3}\Gamma_{4}\Gamma_{3}^{-1}\Gamma_{3'}^{-1}\Gamma_{4}^{-1} = \Gamma_{4'},
	\end{equation}
	\begin{equation}\label{9-8}
\langle{\Gamma_1'},{\Gamma_2} \rangle = \langle {\Gamma_1'},{\Gamma_2'} \rangle= \langle \Gamma_{1'}, \Gamma_{2}^{-1}\Gamma_{2'}\Gamma_{2} \rangle =e,
	\end{equation}
	\begin{equation}\label{9-9}
	\Gamma_{1} = \Gamma_{2'}\Gamma_{2}\Gamma_{1'}\Gamma_{2}^{-1}\Gamma_{2'}^{-1},
	\end{equation}
	\begin{equation}\label{9-9}
	\begin{split}
	\langle \Gamma_{2'}\Gamma_{2}\Gamma_{1'} \Gamma_{2} \Gamma_{1'}^{-1}\Gamma_{2}^{-1}\Gamma_{2'}^{-1}, \Gamma_{4}\Gamma_{3}\Gamma_{4}^{-1} \rangle = \langle \Gamma_{2'}\Gamma_{2}\Gamma_{1'} \Gamma_{2'} \Gamma_{1'}^{-1}\Gamma_{2}^{-1}\Gamma_{2'}^{-1}, \Gamma_{4}\Gamma_{3}\Gamma_{4}^{-1} \rangle = \\
	=  \langle \Gamma_{2'}\Gamma_{2}\Gamma_{1'} \Gamma_{2}^{-1}\Gamma_{2'}\Gamma_{2} \Gamma_{1'}^{-1}\Gamma_{2}^{-1}\Gamma_{2'}^{-1}, \Gamma_{4}\Gamma_{3}\Gamma_{4}^{-1} \rangle = e,
	\end{split}
	\end{equation}
	\begin{equation}\label{9-10a} \Gamma_{2'}\Gamma_{2}\Gamma_{1'}\Gamma_{2}^{-1}\Gamma_{2'}^{-1}\Gamma_{1'}^{-1}\Gamma_{2}^{-1}\Gamma_{2'}^{-1}\Gamma_{4}\Gamma_{3}^{-1}
\Gamma_{3'}\Gamma_{3}\Gamma_{4}^{-1}\Gamma_{2'}\Gamma_{2}\Gamma_{1'}\Gamma_{2'}\Gamma_{2}\Gamma_{1'}^{-1}\Gamma_{2}^{-1}\Gamma_{2'}^{-1} = \Gamma_{4}\Gamma_{3}\Gamma_{4}^{-1},
	\end{equation}
	\begin{equation}\label{9-10b}
	[\Gamma_{1}, \Gamma_{4}\Gamma_{3}\Gamma_{4}^{-1}] = [\Gamma_{1'}, \Gamma_{4}\Gamma_{3}\Gamma_{4}^{-1}] = [\Gamma_{1}, \Gamma_{4}\Gamma_{3'}\Gamma_{4}^{-1}] =
[\Gamma_{1'}, \Gamma_{4}\Gamma_{3'}\Gamma_{4}^{-1}] = e.
	\end{equation}
We also have the following projective relation:
\begin{equation}\label{9-11}
\Gamma_4'\Gamma_4\Gamma_3'\Gamma_3\Gamma_2'\Gamma_2\Gamma_1'\Gamma_1=e.
\end{equation}

Using \eqref{9-1}, $\tilde{G}$ admits the following relations:
\begin{equation}\label{9-19}
\langle\Gamma_1,\Gamma_2\rangle=\langle\Gamma_2,\Gamma_3\rangle=\langle\Gamma_3,\Gamma_4\rangle=e,
\end{equation}
\begin{equation}\label{9-18}
[\Gamma_1,\Gamma_3] = [\Gamma_1,\Gamma_4] =[\Gamma_2,\Gamma_4] =e.
\end{equation}
 It follows that $\tilde{G}\cong S_5$. Thus $\pi_1(X_{\text{Gal}})$  is trivial.

\end{proof}

\end{document}